\newcommand{\Proof}{\medskip\noindent{\bf Proof.}\quad}          
\newtheorem{theorem}{Theorem}
\newtheorem{lemma}[theorem]{Lemma}
\newtheorem{definition}{Definition}
\def\bF{{\mathbb{F}}}
\def\PG{{\mathrm{PG}}}
\def\cV{{\mathcal{V}}}
\def\cA{{\mathcal{A}}}
\def\cH{\mathcal{H}}
\title{Arcs and tensors}
\author{Simeon Ball and Michel Lavrauw}
\date{23 May 2019.\\ The first author acknowledges the support of the project MTM2017-82166-P of the Spanish {\em Ministerio de Econom\'ia y Competitividad.} The second author acknowledges the support of {\em The Scientific and Technological Research Council of Turkey}, T\"UB\.{I}TAK (project no. 118F159).\\
}                                           
\begin{document}
\begin{abstract}
To an arc $\cA$ of $\PG(k-1,q)$ of size $q+k-1-t$ we associate a tensor in $\langle \nu_{k,t}(\cA)\rangle^{\otimes k-1}$, where $\nu_{k,t}$ denotes the Veronese map of degree $t$ defined on $\PG(k-1,q)$.  As a corollary we prove that for each arc $\cA$ in $\PG(k-1,q)$ of size $q+k-1-t$, which is not contained in a hypersurface of degree $t$, there exists a polynomial $F(Y_1,\ldots,Y_{k-1})$ (in $k(k-1)$ variables) where $Y_j=(X_{j1},\ldots,X_{jk})$, which is homogeneous of degree $t$ in each of the $k$-tuples of variables $Y_j$, which upon evaluation at any $(k-2)$-subset $S$ of the arc $\cA$ gives a form of degree $t$ on $\PG(k-1,q)$ whose zero locus is the tangent hypersurface of $\cA$ at $S$, i.e. the union of the tangent hyperplanes of $\cA$ at $S$. This generalises the equivalent result for planar arcs ($k=3$), proven in \cite{BaLa2018}, to arcs in projective spaces of arbitrary dimension. A slightly weaker result is obtained for arcs in $\PG(k-1,q)$ of size $q+k-1-t$ which are contained in a hypersurface of degree $t$. We also include a new proof of the Segre-Blokhuis-Bruen-Thas hypersurface associated to an arc of hyperplanes in $\PG(k-1,q)$.
\end{abstract}
\maketitle

\section{Introduction and motivation}
An {\it arc} of $\PG(k-1,q)$ is a set of points no $k$ of which are contained in a hyperplane. Arcs are the subject of Segre's fundamental problems proposed in 1955 \cite{Segre1955b} and they play an important role in Galois geometry \cite{Segre1967}. Segre's celebrated result from \cite{Segre1955a} which says that an arc of size $q+1$ in $\PG(2,q)$, $q$ odd, is a conic, has inspired many mathematicians to work on problems related to arcs in projective spaces over finite fields.
Normal rational curves are well known examples of arcs of size $q+1$. There are arcs of size $q+2$ in $\PG(2,q)$ when $q$ is even called {\it hyperovals}. For a list of the collineation groups of these arcs, see \cite{PP1999}.

Another driving force for the study of arcs is the fact that they are equivalent to linear Maximum Distance Separable codes (MDS codes), which according to \cite{MS1977} form ``one of the most fascinating chapters in all of coding theory''. 
These codes have been extensively studied and a well-known conjecture (called the {\it MDS conjecture}) claims that if $4 \leq k \leq q-2$, then a $k$-dimensional linear MDS code over the finite field with $q$ elements has length at most $q+1$.  The MDS conjecture was proven for $q$ prime in \cite{Ball2012}.

The most recent result from \cite[Corollary 4]{BaLa2018} verifies the MDS conjecture for $k \leq \sqrt{q}-\sqrt{q}/p+2$, in the case that $q=p^{2h}$ and $p$ is an odd prime.
Contrary to most previous results in this direction (for example, the bounds from \cite{HK1996}, \cite{HK1998}, \cite{Voloch1987}, \cite{Voloch1990a}, \cite{Voloch1990b} and \cite{Voloch1991}) the result from \cite{BaLa2018} does not rely on Segre's algebraic envelope associated to an arc, and deep results on the number of points on algebraic curves over finite fields, in particular the Hasse-Weil theorem and the St\"ohr-Voloch theorem. 
Instead, the results in \cite{BaLa2018} are based on the existence of a certain bi-homogeneous polynomial which upon evaluation at a point of the arc splits into linear factors corresponding to the tangents of the arc through that point. In this paper, this is generalised to arcs in projective spaces of arbitrary dimension, resulting in Theorem \ref{thm:form_existence}, which is proved in Section 5.

In Section~\ref{BBThyp} we compare this result to the hypersurface associated to an arc of hyperplanes as obtained in the sequence of papers \cite{Segre1967} for $k=3$, in \cite{BBT1988} for $k=4,5$, and \cite{BBT1990} for arbitrary dimension $k\geq 3$.

\section{The tangent hypersurfaces and the main theorem}

Throughout, $\cA$ will be an arc of $\PG(k-1,q)$ of size $q+k-1-t$, arbitrarily ordered, and we identify each point of $\cA$ with a fixed vector representative. 
Let $V_r[X]$ denote the vector space of forms (homogeneous polynomials) of degree $r$ in $\bF_q[X_1,\ldots,X_k]$, and $\Phi_r[X]$ the subspace of $V_r[X]$ consisting of forms vanishing on $\cA$. As in the previous sentence we will often write $X$ instead of $X_1,\ldots, X_k$. 

Each subset $S$ of size $k-2$ of $\cA$ is contained in precisely $t$ hyperplanes of $\mathrm{PG}(k-1,q)$ meeting $\cA$ exactly in $S$ (called {\em tangent $S$-hyperplanes}). Their union forms the {\em tangent hypersurface of $\cA$ at $S$}. Each such hypersurface has degree $t$ and is the zero locus of
\begin{eqnarray}\label{eqn:f_S}
f_S(X)=\prod_{i=1}^t \alpha_i(X),
\end{eqnarray}
where $\alpha_i(X)$, $i=1,\ldots,t$, are linear forms whose kernels are the $t$ tangent $S$-hyperplanes. This defines $f_S(X)$ up to a nonzero scalar factor, which we will now determine based on the evaluation of $f_S$ at carefully chosen points of $\cA$.

Let $E$ be the set of the first $k-2$ elements of $\cA$. For each $(k-2)$-subset $S\subset \cA$,
scale the polynomial $f_S(X)$ so that
\begin{eqnarray}\label{eqn:scaling}
f_S(e)=(-1)^{s(t+1)}f_{S\cup\{e\}\setminus \{a\}}(a),
\end{eqnarray}
where $e$ is the first element of $E\setminus S$, $a$ is the last element of $S\setminus E$, and $s$ is the parity of the permutation which orders $S\cup \{e\}$ as in the ordering of $\cA$ (to determine the value of $s$ we assume the ordering of $\cA$ for the subset $S$). With this notation it should be understood that  the order is respected when taking the union of ordered sets, i.e. with ``union" we mean the concatenation of the ordered sets.

We are now in a position to state the main result of this article.

\begin{theorem}\label{thm:form_existence}
Let $\cA$ be an arc in $\PG(k-1,q)$ of size $q+k-1-t$ and let $\Phi_t[X]$ denote the space of homogeneous polynomials of degree $t$ in $X=(X_1,\ldots,X_k)$ which are zero on $\cA$. There exists
a homogeneous polynomial $F(Y_1,\ldots,Y_{k-1})$ (in $k(k-1)$ variables) where $Y_j=(X_{j1},\ldots,X_{jk})$, and $F$ is homogeneous of degree $t$ in each of the $k$-tuples of variables $Y_j$, with the following properties.

\begin{enumerate}
\item[(i)]
For every $(k-2)$-subset $S=[a_1,\ldots,a_{k-2}]$ of the arc $\cA$ we have 
$$
F(a_1,\ldots,a_{k-2},X)=(-1)^{s(t+1)}f_S(X) \mbox{ modulo } \Phi_t[X],
$$
where $s$ is the parity of the permutation which orders $S$ as in the ordering of $\cA$.
\item[(ii)]
For every sequence $a_1,\ldots,a_{k-1}$ of elements of $\cA$ in which points are repeated, 
$$
F(a_1,\ldots,a_{k-1})=0.
$$
\item[(iii)]
For every permutation $\sigma \in \mathrm{Sym}(k-1)$,
$$
F(Y_{\sigma(1)},\ldots,Y_{\sigma(k-1)})=(-1)^{s(t+1)}F(Y_1,\ldots,Y_{k-1}),
$$
modulo $\Phi_t[Y_1],\ldots,\Phi_t[Y_{k-1}]$, where $s$ is the parity of $\sigma$.
\item[(iv)]
Any form $F(Y_1,\ldots,Y_{k-1})$ satisfying {\rm(i), (ii)} and {\rm (iii)} is unique modulo $\Phi_t[Y_1]$, $\ldots,$ $\Phi_t[Y_{k-1}]$.
\end{enumerate}
\end{theorem}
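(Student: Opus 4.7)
The plan is to construct $F$ in two stages: first, consistently pin down the scalars of the tangent polynomials $f_S$ across all $(k-2)$-subsets $S\subset\cA$; second, assemble them into a multi-homogeneous polynomial by viewing $F$ as a multilinear form on the Veronese span $W=\langle\nu_{k,t}(\cA)\rangle$ in each slot.

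For the first stage, I would start by arbitrarily normalising $f_E$, where $E$ is the set of the first $k-2$ elements of $\cA$. The relation (\ref{eqn:scaling}) then recursively determines $f_S$ for every other $(k-2)$-subset $S$ by successively exchanging one element at a time. The crucial check is that this recursion is path-independent: two different sequences of one-element swaps from $E$ to $S$ must yield the same $f_S$. This reduces to two basic closure identities---commuting a pair of disjoint swaps, and a triangular identity for three pairwise swappable elements---and the sign exponent $s(t+1)$ in (\ref{eqn:scaling}) is engineered precisely so that these identities hold after cancellation. I expect this combinatorial and sign-tracking verification to be the main technical obstacle of the proof.

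For the second stage, the key tool is the perfect pairing $V_t[X]/\Phi_t[X]\cong W^{\ast}$ arising from evaluating forms at points of $\cA$ (with $\Phi_t[X]=W^{\perp}$ by definition). I would define a multilinear form on $\nu_{k,t}(\cA)^{k-1}$ by setting $F(a_1,\ldots,a_{k-1}):=(-1)^{s(t+1)}f_S(a_{k-1})$ for ordered tuples of distinct points, where $S=\{a_1,\ldots,a_{k-2}\}$ is reordered consistently with $\cA$ and $s$ is the sign of that reordering. This extends uniquely by multilinearity to a form on $W^{\otimes(k-1)}$, and lifting through the pairing in each slot produces a polynomial $F(Y_1,\ldots,Y_{k-1})$ that is multi-homogeneous of degree $t$ and unique modulo $\Phi_t[Y_j]$ in each slot, yielding (iv). Property (i) is built into the definition, and (iii) reduces to verifying that a transposition of two adjacent slots introduces the sign $(-1)^{t+1}$, which is exactly the content of (\ref{eqn:scaling}) after translating between the ordering of $S$ and the sign $s$. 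Property (ii) then follows from (iii): if the repeated point lies in the last slot, so $a_{k-1}\in S$, then $f_S(a_{k-1})=0$ because $a_{k-1}$ lies on every tangent hyperplane of $\cA$ at $S$; if the repetition is entirely within the first $k-2$ slots, apply a cyclic permutation via (iii) to transport one occurrence into the last slot and reduce to the previous case.
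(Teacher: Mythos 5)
Your overall architecture matches the paper's: normalise the tangent forms via (\ref{eqn:scaling}), package the values $(-1)^{s(t+1)}f_S(a_{k-1})$ into a function on $(k-1)$-tuples of arc points, extend it to a multilinear form on $\langle\nu_{k,t}(\cA)\rangle^{\otimes (k-1)}$, and pull back through the Veronese--Segre embedding. However, there are two genuine gaps, and the first is fatal as written. The ``closure identities'' you defer to --- equivalently, the full symmetry $g(T^\sigma)=(-1)^{s(t+1)}g(T)$ for an arbitrary $(k-1)$-subset $T$ of $\cA$ and arbitrary $\sigma$ --- are \emph{not} a consequence of sign engineering in (\ref{eqn:scaling}); they constitute the scaled coordinate-free lemma of tangents, which is the geometric heart of the whole construction. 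The paper proves it by taking a basis $B$ of $k$ arc points, classifying all hyperplanes through each codimension-two subspace $\langle B_{l,j}\rangle$ (the $t$ tangent hyperplanes, the hyperplanes spanned with the remaining arc points, and the two coordinate hyperplanes), and using $\prod_{d\in\mathbb{F}_q^{*}}d=-1$ to derive the multiplicative relation (\ref{eqn:transpositions}) among values of $g$; only then does a delicate induction on $|T\setminus E|$ combine this with the scaling to obtain the symmetry. Nothing in the choice of the exponent $s(t+1)$ alone forces your ``triangular identity'' to hold: the nonzero scalars, not just the signs, must match, and that is Segre's arithmetic over $\bF_q$. By calling this step ``combinatorial and sign-tracking verification'' and not supplying it, you have in effect assumed the main lemma.

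The second gap is in the extension step. Since $\nu_{k,t}(\cA)$ spans $W=\langle\nu_{k,t}(\cA)\rangle$ but is in general not linearly independent, a function prescribed on $\nu_{k,t}(\cA)^{k-1}$ does not automatically ``extend uniquely by multilinearity'' to $W^{\otimes(k-1)}$: one must verify that the prescribed values respect every linear relation among the points $\nu_{k,t}(a)$ in every slot. The paper's Lemma~\ref{lem:bar_h} does exactly this, defining $\bar h$ on a socle and then showing, by an induction that replaces socle points one at a time and crucially invokes both the symmetry of $g$ and the fact that $a\mapsto f_S(a)$ is linear on the Veronese image, that $\bar h$ agrees with $h$ on all of $\nu_{k,t}(\cA)^{k-1}$. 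Granting these two ingredients, your treatment of (i)--(iv) is essentially correct (and deriving (ii) from (iii) by transporting the repeated point into the last slot is a legitimate shortcut), but as it stands the proposal omits proofs of the two substantive lemmas on which everything rests.
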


The following three sections are mainly dedicated to proving Theorem~\ref{thm:form_existence}.

\section{The scaled coordinate-free lemma of tangents}
In this section we prove what we call the {\em scaled coordinate-free lemma of tangents} for an arc in a projective space of arbitrary dimension. The original lemma of tangents is due to Segre \cite{Segre1967}. A coordinate-free version was given in \cite{Ball2012}, and a scaled coordinate-free version for the planar case was introduced in \cite{BaLa2018}.

As before, $\cA$ is an arc in $\PG(k-1,q)$, with tangent hypersurfaces given as the zero loci of the forms $f_S(X)$ as defined in (\ref{eqn:f_S}) and scaled as in (\ref{eqn:scaling}).
Define the function $g$ on ordered subsets of $\cA$ of size $k-1$ by
\begin{eqnarray}\label{eqn:g}
g(S\cup\{a\})= (-1)^{s(t+1)}f_S(a),
\end{eqnarray} 
where $S$ is an ordered subset of $\cA$ of size $k-2$ and $s$ is the parity of the permutation which orders $S$ as in the ordering of $\cA$. Note that $S$ is considered as an unordered set in the notation $f_S(a)$.
Extend the definition of $g$ by setting it equal to zero when evaluated at $(k-1)$-tuples with repeated elements. Recall that $E$ consists of the first $k-2$ elements of $\cA$.

\begin{lemma}\label{lem:g_functionpre}
If $\sigma$ is a permutation in $\mathrm{Sym}(k-1)$ and $T$ is an ordered $(k-1)$-subset of $\cA$ containing $E$, then
$$
g(T^\sigma)=(-1)^{s(t+1)}g(T),
$$
where $s$ is the parity of the permutation $\sigma$.
\end{lemma}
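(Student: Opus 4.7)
The plan is to prove the more explicit formula
$$
g(T) = (-1)^{s(T)(t+1)} f_E(b),
$$
valid for every ordered $(k-1)$-tuple $T\supseteq E$ of distinct elements of $\cA$, where $b$ is the unique element of the underlying set $T\setminus E$ and $s(T)$ is the parity of the permutation sending the canonical tuple $T_0=[e_1,\ldots,e_{k-2},b]$ to $T$. Once this formula is in hand the lemma drops out at once: applying $\sigma$ composes $T_0\to T$ with $\sigma$, so $s(T^\sigma)\equiv s(T)+\mathrm{sgn}(\sigma)\pmod 2$, and taking ratios produces the claimed sign $(-1)^{\mathrm{sgn}(\sigma)(t+1)}$. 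Tuples with repeated entries can be set aside, as $g$ vanishes on them by convention.

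To establish the formula, write $T=[c_1,\ldots,c_{k-2},a]$ and $S=\{c_1,\ldots,c_{k-2}\}$, so by definition $g(T)=(-1)^{s_S(t+1)}f_S(a)$, with $s_S$ the parity of reordering $[c_1,\ldots,c_{k-2}]$ according to $\cA$. Split into two cases according to whether the last entry $a$ equals $b$. If $a=b$, then $S=E$, so $f_S(a)=f_E(b)$, and the permutation $T_0\to T$ acts only on the first $k-2$ positions, whence $s(T)=s_S$ and the formula is immediate.

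The substantive case is $a=e_j$ for some $j\in\{1,\ldots,k-2\}$. Then $S=(E\setminus\{e_j\})\cup\{b\}$; in the scaling relation (\ref{eqn:scaling}) the role of $e$ is played by $e_j$ (the single element of $E\setminus S$) and the role of the scaling's $a$ is played by $b$ (the single element of $S\setminus E$), so
$$
f_S(e_j) = (-1)^{(k-1-j)(t+1)}\, f_E(b),
$$
the exponent $k-1-j$ being the parity of the cycle that returns $e_j$ from the appended position $k-1$ back to its rightful position $j$ in $\cA$-order. Substituting into the expression for $g(T)$ gives $g(T)=(-1)^{(s_S+k-1-j)(t+1)}f_E(b)$.

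The final step is to verify that $s(T)\equiv s_S+k-1-j\pmod 2$. The natural way is to route the permutation $T_0\to T$ through the intermediate tuple $T^\ast=[e_1,\ldots,\widehat{e_j},\ldots,e_{k-2},b,e_j]$: the first step $T_0\to T^\ast$ is exactly the cycle of parity $k-1-j$ that pushes $e_j$ from position $j$ to position $k-1$, and the second step $T^\ast\to T$ permutes only the first $k-2$ positions with parity $s_S$. Parities add modulo $2$, yielding the desired identity. The principal obstacle is precisely this parity bookkeeping---aligning the exponent generated by (\ref{eqn:scaling}) with the combinatorial parity of $T_0\to T$---and the intermediate tuple $T^\ast$ is engineered so that the two contributions appear as a manifestly additive decomposition.
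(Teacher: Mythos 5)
Your proof is correct and is essentially the paper's argument in a mildly repackaged form: the paper verifies the sign rule on the generating transpositions $(j,k-1)$ applied to the canonical tuple $[e_1,\ldots,e_{k-2},b]$, using exactly the same application of the scaling (\ref{eqn:scaling}) to $S=(E\setminus\{e_j\})\cup\{b\}$ and the same $(-1)^{(k-1-j)(t+1)}$ parity count, whereas you distill this into the closed-form $g(T)=(-1)^{s(T)(t+1)}f_E(b)$ from which the lemma is immediate. The bookkeeping via the intermediate tuple $T^\ast$ matches the paper's computation step for step, so no gap.
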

\Proof
If $\sigma$ is a permutation in $\mathrm{Sym}(k-1)$ fixing $k-1$ then, by definition, 
\begin{eqnarray}\label{eqn:sigma}
g(T^\sigma)=(-1)^{s(t+1)}g(T),
\end{eqnarray}
where $s$ is the parity of the permutation $\sigma$.

So in order to prove the assertion it suffices to show that 
$$
g(a_1,\ldots,a_{k-3},a_{k-1},a_{k-2})=(-1)^{t+1}g(a_1,\ldots,a_{k-1}),
$$ 
for any distinct $a_1,\ldots,a_{k-1}\in \cA$.

Consider any $k-1$ distinct points $a_1,\ldots,a_{k-1}\in \cA$, and put $T=[a_1,\ldots,a_{k-1}]$. Let $T_j$ denote the ordered set obtained from $T$ by removing the $j$-th point.
If $E=[e_1,\ldots,e_{k-2}]=[a_1,\ldots,a_{k-2}]$ then by the definition of $g$ and the scaling (\ref{eqn:scaling}) of the tangent forms $f_S(X)$ we have
$$
g(a_1,\ldots,a_{k-1})=g(e_1,\ldots,e_{k-2},a_{k-1})=f_E(a_{k-1})=(-1)^{(k-2)(t+1)}f_{T_1}(e_1).
$$
This is equal to 
$$
(-1)^{(k-2)(t+1)}g(e_2,\ldots,e_{k-2},a_{k-1},e_1)=(-1)^{(t+1)}g(a_{k-1},e_2,\ldots,e_{k-2},e_1)
$$
where the last equality was obtained by applying (\ref{eqn:sigma}).

Likewise, for any $j\in \{2,\ldots,k-1\}$ we obtain 
$$
f_E(a_{k-1})=(-1)^{(k-1-j)(t+1)}f_{T_j}(e_j) 
$$
which is equal to 
$$(-1)^{(k-1-j)(t+1)}g(e_1,\ldots,e_{j-1},e_{j+1},\ldots,e_{k-2},a_{k-1},e_j),$$
and by applying  (\ref{eqn:sigma}) we obtain
$$
g(e_1,\ldots,e_{k-2},a_{k-1})=(-1)^{(t+1)}g(e_1,\ldots,e_{j-1},a_{k-1},e_{j+1},\ldots,e_{k-2},e_j).
$$
We have shown that for any $T=[e_1,\ldots,e_{k-2},a_{k-1}]$,
$$g(T^\sigma)=(-1)^{t+1}g(T)$$
for any transposition $\sigma=(j,k-1)$. In combination with (\ref{eqn:sigma}) this proves the lemma.
\qed

Next we formulate and prove the main result of this section.

\begin{lemma}\label{lem:g_function}{\bf[Scaled coordinate-free lemma of tangents]}
Let $\cA$ be an arc in $\PG(k-1,q)$, with tangent hypersurfaces given as the zero loci of the forms $f_S(X)$ as defined in (\ref{eqn:f_S}) and scaled as in (\ref{eqn:scaling}), and let $g$ be the function as defined in (\ref{eqn:g}). 
If $\sigma$ is a permutation in $\mathrm{Sym}(k-1)$ and $T$ is a $(k-1)$-subset of $\cA$ then
$$
g(T^\sigma)=(-1)^{s(t+1)}g(T),
$$
where $s$ is the parity of the permutation $\sigma$.
\end{lemma}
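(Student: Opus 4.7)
The plan is to reduce the lemma to verifying invariance under a single transposition moving the last position, and then prove that identity by induction on how far $T$ is from containing $E$.

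First, since $\mathrm{Sym}(k-1)$ is generated by transpositions, and equation (\ref{eqn:sigma}) already establishes the formula for all permutations fixing position $k-1$, it suffices to prove
$$
g([a_1,\ldots,a_{k-3},a_{k-1},a_{k-2}])=(-1)^{t+1}g([a_1,\ldots,a_{k-1}])
$$
for every sequence $a_1,\ldots,a_{k-1}$ of distinct points of $\cA$. Unwinding the definition (\ref{eqn:g}), this is equivalent to a \emph{generalised scaling relation} of the form
$$
f_{S'\cup\{b\}}(a)=(-1)^{\epsilon(t+1)}f_{S'\cup\{a\}}(b),
$$
with $S'=\{a_1,\ldots,a_{k-3}\}$, $a=a_{k-2}$, $b=a_{k-1}$, and $\epsilon$ an explicit parity determined by the positions of $a$, $b$ and the elements of $S'$ in the ordering of $\cA$. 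The defining scaling (\ref{eqn:scaling}) is precisely the instance of this identity in the very special case where $\{a,b\}$ consists of the last element of $(S'\cup\{a\})\setminus E$ together with the first element of $E\setminus(S'\cup\{a\})$.

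I would then prove this generalised scaling by induction on $m=|E\setminus(S'\cup\{a,b\})|$, the number of elements of $E$ missing from the underlying $(k-1)$-set. When $m=0$ the set $S'\cup\{a,b\}$ already contains $E$, and the required identity is exactly a case of Lemma \ref{lem:g_functionpre}. For the inductive step, pick $e\in E\setminus(S'\cup\{a,b\})$ and an element $c\in(S'\cup\{a,b\})\setminus E$; applying (\ref{eqn:scaling}) to the $(k-2)$-subset obtained from $S'\cup\{a,b\}$ by deleting $c$, after first reordering so that $c$ occupies the role of ``last element of $S\setminus E$'' and $e$ the role of ``first element of $E\setminus S$'' (which introduces further $(-1)^{t+1}$ factors through (\ref{eqn:sigma})), one obtains a relation between $g$-values for $S'\cup\{a,b\}$ and for $(S'\cup\{a,b\}\cup\{e\})\setminus\{c\}$. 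The latter set has one more element of $E$, so the inductive hypothesis applies and delivers the required sign.

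The main obstacle is the sign bookkeeping. The scaling (\ref{eqn:scaling}) is phrased for a canonical pair of swap elements, so to invoke it for an arbitrary swap one must first reorder $S$ using (\ref{eqn:sigma}), each elementary reordering contributing a $(-1)^{t+1}$. Following these contributions through both the reduction to a single transposition and the induction on $m$, and verifying that the accumulated parity recombines into the single prescribed sign $(-1)^{s(t+1)}$ of the statement, is the delicate combinatorial core of the argument.
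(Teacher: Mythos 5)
Your reduction to the single transposition $\sigma=(k-2,k-1)$ and your identification of the base case with Lemma~\ref{lem:g_functionpre} both match the paper. The gap is in the inductive step, and it is not a matter of sign bookkeeping: as described, your argument uses only the normalisation (\ref{eqn:scaling}) and the definitional relation (\ref{eqn:sigma}), and the lemma cannot follow from these alone. Note that (\ref{eqn:scaling}) applied to a $(k-2)$-set $S$ relates $f_S(e)$ to $f_{S\cup\{e\}\setminus\{a\}}(a)$, and both sides are $g$-values of the \emph{same} $(k-1)$-set $S\cup\{e\}$; the scaling never connects $g$-values of two different $(k-1)$-sets, so the claimed relation between $g$-values for $S'\cup\{a,b\}$ and for $(S'\cup\{a,b\}\cup\{e\})\setminus\{c\}$ does not come out of it. Moreover the roles of ``first element of $E\setminus S$'' and ``last element of $S\setminus E$'' are fixed by the ordering of $\cA$ and cannot be reassigned to an arbitrary pair $c,e$ by reordering via (\ref{eqn:sigma}), which only permutes the listing of $S$. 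Most fundamentally, (\ref{eqn:scaling}) is a pure normalisation: it pins down the scalar of each $f_S$ recursively but says nothing about the value of $f_S$ at points other than its one canonical evaluation point. The quantities the lemma compares, $f_{U\setminus\{b\}}(b)$ versus $f_{U\setminus\{a\}}(a)$, involve ratios of values of tangent forms at non-canonical points of the arc, and these encode genuine geometry. For $k=3$ the statement is Segre's lemma of tangents, which is a theorem about arcs, not a formal consequence of a choice of scalars.

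The missing ingredient is the generalised lemma of tangents. The paper derives the relation (\ref{eqn:transpositions}),
$$
g(B_{k}^{(jl)})=(-1)^{t+1}g(B_{k})\,\frac{g(B_l)\,g(B_{j}^{(lk)})}{g(B_{l}^{(jk)})\,g(B_j)},
$$
by taking a basis $B\subset\cA$ of $k$ arc points, observing that the hyperplanes through the span of $k-2$ of them split into the $t$ tangent hyperplanes and the secants through the remaining arc points, and invoking $\prod_{d\in\bF_q\setminus\{0\}}d=-1$. This single geometric identity, combined with an induction on $|T\setminus E|$ and a case analysis according to whether the two swapped points lie in $E$, propagates the antisymmetry from Lemma~\ref{lem:g_functionpre} to all $T$. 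Your induction skeleton could be retained, but the engine of the inductive step has to be (\ref{eqn:transpositions}) or an equivalent geometric fact; without it the argument proves nothing beyond the case $T\supseteq E$.
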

\Proof
Pick any ordered subset $B=[a_1,\ldots,a_k]\subset \cA$ of size $k$. Since $\cA$ is an arc, it follows that $B$ is a basis. 
Denote by $B_{l,i,j}$ the ordered set obtained from $B$ by removing the $l$-th, the $i$-th and the $j$-th point from $B$ and by $B_{l,i,j}(x,y)$ the ordered set of points obtained from $B$ by removing the $l$-th point from $B$ and replacing the $i$-th point by $x$ and the $j$-th point by $y$. 

Let $1\leq l<j<k$ be fixed. For $x,y\in \cA\setminus B_{l,j,k}$ define
$$h(x,y)=g(B_{l,j,k}(x,y)).$$
Then for any point $u$, with coordinates $(u_1,\ldots,u_k)$ w.r.t. $B$, we have
$$
h(a_{l},u)=\prod_{i=1}^t (b_{ij}u_{j}+b_{ik}u_k),\quad
h(a_{j},u)=\prod_{i=1}^t (c_{il}u_{l}+c_{ik}u_k),\quad
h(a_{k},u)=\prod_{i=1}^t (d_{il}u_{l}+d_{ij}u_{j}),
$$
for some $b_{ij}, c_{ij}, d_{ij} \in {\mathbb F}_q$.

Let $B_{l,j}$ denote the ordered set of points obtained from $B$ by removing the $l$-th and the $j$-th point. With respect to the basis $B$, the hyperplane containing $\langle B_{l,j}\rangle$ and $s=(s_1,s_2,\ldots,s_k) \in \cA \setminus B$ is the kernel of the linear form $X_{l}-(s_{l}/s_j)X_j$. Since these hyperplanes are distinct from the tangent $(B_{l,j})$-hyperplanes, together they constitute all hyperplanes containing $B_{l,j}$, except the kernels of the linear forms $X_{l}$ and $X_j$. Hence, 
$$
\prod_{i=1}^t \frac{d_{ij}}{d_{il}} \prod_{s \in \cA \setminus B} \frac{-s_{l}}{s_j}=\prod_{d \in {\mathbb F}_q \setminus \{ 0\}} d = -1.
$$
Observing that $h(a_k,a_j)=\prod_{i=1}^t d_{ij}$ and $h(a_{k},a_l)=\prod_{i=1}^t d_{il}$, this gives
$$
h(a_{k},a_j) \prod_{s \in \cA \setminus B} s_{l}= (-1)^{|\cA\setminus B|+1} h(a_{k},a_{l}) \prod_{s \in \cA \setminus B} s_{j}.
$$
Similarly, by considering hyperplanes through $B_{l,k}$,
$$
h(a_j,a_l) \prod_{s \in \cA \setminus B} s_{k}= (-1)^{|\cA\setminus B|+1} h(a_j,a_k) \prod_{s \in \cA \setminus B} s_{l}.
$$
and by considering hyperplanes through $B_{j,k}$,
$$
h(a_l,a_k) \prod_{s \in \cA \setminus B} s_{j}= (-1)^{|\cA\setminus B|+1} h(a_l,a_j) \prod_{s \in \cA \setminus B} s_{k}.
$$
Combining these three equations, and observing that $(-1)^{|\cA\setminus B|+1}=(-1)^{t+1}$, we obtain
$$h(a_j,a_l)=(-1)^{t+1}\quad h(a_l,a_j)
\quad \frac{h(a_j,a_k)h(a_k,a_l)}{h(a_k,a_j)h(a_l,a_k)}.
$$
We can rewrite this as
\begin{eqnarray}\label{eqn:transpositions}
g(B_{k}^{(jl)})=(-1)^{t+1}g(B_{k})
\frac{g(B_l)g(B_{j}^{(lk)})}{g(B_{l}^{(jk)})g(B_j)}
\end{eqnarray}
where $B_l$ is obtained from $B$ by removing the $l$-th vector, and with the understanding that $B_l^\sigma$ denotes the result of
 removing the $l$-th vector from $B$ after applying the permutation $\sigma\in {\mathrm{Sym}}(k)$ to the $k$ positions.

We will prove the lemma by induction on the size of $T\setminus E$ (as sets), where as before $E$ consists of the first $k-2$ elements of the ordered arc $\cA$.

If $|T\setminus E|=1$ then the lemma follows from Lemma~\ref{lem:g_functionpre}.

Suppose that for each ordered $(k-1)$-tuple $T$ for which $T\setminus E$ has size at most
$r\geq 1$, we have 
$$g(T^\sigma)=(-1)^{(t+1)}g(T)$$
for any transposition $\sigma \in \mathrm{Sym}(k-1)$.

Consider an ordered $(k-1)$-tuple $T=[a_1,\ldots,a_{k-1}]$ with $T\setminus E$ of size $r+1$. 

Suppose that $a_{k-2}, a_{k-1} \not\in E$. 
Let 
$e_\eta$ denote the first point in $E\setminus T$ in the ordering of $\cA$, and put $B=[a_1,\ldots, a_{k-1},e_\eta]$.
Then the left hand side of (\ref{eqn:transpositions}),
with $j=k-2$ and $l=k-1$, becomes
$$
g(B_{k}^{(jl)})=g(a_1,\ldots,a_{k-3},a_{k-1},a_{k-2})
$$ 
while the right hand side equals
$$
(-1)^{t+1}g(B_k)
\frac{g(a_1,\ldots,a_{k-3},a_{k-2},e_\eta)g(a_1,\ldots,a_{k-3},e_\eta,a_{k-1})}{g(a_1,\ldots,a_{k-3},e_\eta,a_{k-2})g(a_1,\ldots,a_{k-3},a_{k-1},e_\eta)},
$$
which by the induction hypothesis equals 
$$(-1)^{t+1}g(B_k)=(-1)^{t+1}g(a_1,\ldots,a_{k-3},a_{k-2},a_{k-1}),
$$
since $B_l\setminus E$ and $B_j\setminus E$ are of size $r$. 
This proves that if the points of $T$ in position $k-2$ and $k-1$ do not belong to $E$ then 
\begin{eqnarray}\label{eqn:step1}
g(T^\sigma)=(-1)^{t+1}g(T)
\end{eqnarray} 
for the transposition $\sigma=(k-2,k-1)$.

Next, suppose $a_{k-2}\in E$ and $a_{k-1}\notin E$ is the last point of $T$ in the ordering of $\cA$.
Let $e_\eta$ denote the first point in $E\setminus (T \setminus \{a_{k-2}\})$ in the ordering of $\cA$. 

Let $S=\{a_1,\ldots,a_{k-3},a_{k-1}\}$. By the scaling (\ref{eqn:scaling}) of the tangent forms $f_S(X)$ we have
\begin{eqnarray}\label{eqn:intermediate2}
f_S(e_\eta)=(-1)^{s(t+1)}f_{S\setminus\{a_{k-1}\}\cup \{e_\eta\}}(a_{k-1}),
\end{eqnarray}
where $s$ is the number of transpositions needed to reorder $S\cup\{e_\eta\}$ as in the ordering of $\cA$.
Moreover, by the definition of $g$, we have
$$
f_{S}(e_\eta)=(-1)^{s_1(t+1)}g(a_1,\ldots,a_{k-3},a_{k-1},e_\eta),
$$
where $s_1$ is the number of transpositions needed to reorder $[a_1,\ldots,a_{k-3},a_{k-1}]$ as in the ordering of $\cA$,
and 
$$
f_{S\setminus\{a_{k-1}\}\cup \{e_\eta\}}(a_{k-1})=(-1)^{s_2(t+1)}g(a_1,\ldots,a_{k-3},e_\eta,a_{k-1})
$$
where $s_2$ is the number of transpositions needed to reorder $[a_1,\ldots,a_{k-3},e_\eta]$ as in the ordering of $\cA$.
Since $a_{k-1}$ is the last point of $T$ in the ordering of $\cA$ we have 
$$s_2\equiv s_1+s-1\mod 2$$
and therefore 
$$
(-1)^{s_1(t+1)}(-1)^{s(t+1)}(-1)^{s_2(t+1)}=(-1)^{(t+1)}.
$$
Combining this with (\ref{eqn:intermediate2}) we obtain
\begin{eqnarray}\label{eqn:finale}
g(a_1,\ldots,a_{k-3},a_{k-1},e_\eta)=(-1)^{(t+1)}g(a_1,\ldots,a_{k-3},e_\eta,a_{k-1}).
\end{eqnarray}

Let $B$ denote the ordered $k$-tuple obtained from $T$ by adding the point $e_\eta\in E$. With $j=k-2$ and $l=k-1$, the induction hypothesis implies
$$g(B_l)=(-1)^{(t+1)}g(B_l^{(jk)})$$
since $B_l\setminus E$ has size $r$, and by (\ref{eqn:finale})
$$g(B_j)=(-1)^{(t+1)}g(B_j^{(lk)}).$$

Therefore by 
(\ref{eqn:transpositions}) we obtain $g(B_k^{(jl)})=(-1)^{(t+1)}g(B_k)$, i.e.
$$g(T)=(-1)^{(t+1)}g(T^{\sigma}),$$
for the transposition $\sigma=(k-2,k-1)$.

Next suppose $a_{k-2}\in E$, $a_{k-1} \not\in E$ and $a_{k-1}$ is not the last point of $T$, in the ordering of $\cA$.
If $a_j$ is the last point of $T$ in the ordering of $\cA$, then $a_j\notin E$ and therefore $j<k-2$.
Consider the transpositions $\tau=(j,k-2)$ and $\sigma=(k-2,k-1)$. 
Applying the permutation $\tau\sigma\tau\sigma\tau$ to $T$ we get
$$
T^{\tau\sigma\tau\sigma\tau}=[a_1,\ldots,a_j,\ldots, a_{k-1},a_{k-2}]
$$
which is $T^\sigma$. Moreover, the first time that $\sigma$ is applied, the pair of points
in the last two positions is $(a_j,a_{k-1})$, consisting of two points of $T\setminus E$, and therefore by (\ref{eqn:step1}) this
gives a factor $(-1)^{(t+1)}$ to the evaluation of $g$. The second time $\sigma$ is applied, the pair of points in the last
two positions is $(a_{k-2},a_j)$ where $a_{k-2}\in E$ and $a_j$ is the last point of $T$ in the ordering of 
$\cA$, and so, this time by (\ref{eqn:intermediate2}), this gives a factor $(-1)^{(t+1)}$ to the evaluation of $g$.

Finally, by (\ref{eqn:sigma}), each of the three applications of $\tau$ also gives a factor $(-1)^{(t+1)}$.
This amounts to a total of five factors $(-1)^{(t+1)}$, and we may conclude that also in this case
$$g(T)=(-1)^{(t+1)}g(T^{\sigma}),$$
for the transposition $\sigma=(k-2,k-1)$.

Thus, we have proved that if $a_{k-2} \in E$ and $a_{k-1} \not\in E$ or if $a_{k-2} \not\in E$ and $a_{k-1} \in E$ then 
\begin{eqnarray}\label{eqn:step2}
g(T^\sigma)=(-1)^{t+1}g(T)
\end{eqnarray} 
for the transposition $\sigma=(k-2,k-1)$. 

\bigskip

Finally suppose that both elements $a_{k-2}, a_{k-1} \in E$. 
Let $a_j$ ($j\in \{1,\ldots,k-3\}$) be a point of $T\setminus E$ and consider the transpositions
$\tau=(j,k-2)$ and $\sigma=(k-2,k-1)$. Then, similarly as above we have
$T^{\tau\sigma\tau\sigma\tau}=T^\sigma$, which this time also making use of
(\ref{eqn:step2}) implies
\begin{eqnarray}\label{eqn:step3}
g(T^{\sigma})=(-1)^{t+1}g(T).
\end{eqnarray} 
This concludes the proof.\qed

\section{A tensor associated with an arc}
In this section we show how the coordinate-free lemma of tangents can be used to construct a particular tensor which will eventually lead to our main result Theorem \ref{thm:form_existence}.
Let $\nu_{k,t}$ denote the {\it Veronese map of degree $t$} 
$$\nu_{k,t}~:~\PG(k-1,q) \rightarrow \PG(N-1,q)~:~x=(x_1,\ldots,x_k)\mapsto (\ldots, x^I,\ldots),$$
where $N={{k+t-1}\choose{t}}$. Under this map, the image of a point $x$ is the point whose coordinate vector consists of all possible monomials $x^I$ of degree $t$ in $x_1,\ldots, x_k$. Thus, a coordinate of the image of $x$ is of the form $x^I=x_1^{d_1}\cdots x_k^{d_k}$, where $d_1+\cdots+d_k=t$. The image of the Veronese map is an algebraic variety, called the  {\it Veronese variety}, and is denoted by $\cV_{k,t}(\bF_q)$.

For each ordered $(k-2)$-subset $S\subset \cA$ we consider the associated linear form $h_S\in \bF_q[Z_1,\ldots,Z_N]$ defined by
\begin{eqnarray}
h_S\circ \nu_{k,t} =f_S.
\end{eqnarray}


We define a function $h$ from
$$\nu_{k,t}(\cA)\times \nu_{k,t}(\cA)\times \ldots \times \nu_{k,t}(\cA) \quad \mbox{  ($k-1$ factors)}$$ to ${\mathbb F}_q$
by
\begin{eqnarray}
h(\nu_{k,t}(a_1),\nu_{k,t}(a_2),\ldots,\nu_{k,t}(a_{k-1})):=g( a_1,a_2,\ldots, a_{k-1}).
\end{eqnarray}
A {\em $t$-socle} for $\cA$ is a set of points of $\cA$ whose image under the Veronese map of degree $t$ spans the subspace spanned by $\cA$ under the Veronese map. So a $t$-socle is a set of points $e_1,\ldots,e_m\in \cA$ for which 
$$\langle \nu_{k,t}(e_1),\ldots,\nu_{k,t}(e_m)\rangle=\langle \nu_{k,t}(\cA)\rangle.
$$
We define the function $\bar h$ from $\langle \nu_{k,t}(\cA)\rangle^{\otimes k-1}$ to ${\mathbb F}_q$ by
$$\bar h\left ( \sum_{i_1} c_{1i_1} \nu_{k,t}(e_{i_1})\otimes \sum_{i_2} c_{2i_2} \nu_{k,t}(e_{i_2})\otimes \ldots\otimes \sum_{i_{k-1}} c_{1i_{k-1}} \nu_{k,t}(e_{i_{k-1}})\right )
$$
$$:=\sum_{i_1} c_{1i_1}\sum_{i_2} c_{2i_2} \ldots \sum_{i_{k-1}} c_{1i_{k-1}}  
g(e_{i_1},\ldots,e_{i_{k-1}}),
$$
where each sum is from $i_j=1,\ldots,m$.

We will show that for each $a_1, \ldots, a_{k-1} \in \cA$,
$$
\bar h(\nu_{k,t}(a_1)\otimes \nu_{k,t}(a_2)\otimes \ldots \otimes \nu_{k,t}(a_{k-1}))=g( a_1,a_2,\ldots, a_{k-1}).
$$

\begin{lemma}\label{lem:bar_h}
The function $\bar h$ defines a multilinear form on $\langle \nu_{k,t}(\cA)\rangle^{\otimes k-1}$ whose restriction to
$$\nu_{k,t}(\cA)\times \nu_{k,t}(\cA)\times \ldots \times \nu_{k,t}(\cA) \quad \mbox{  ($k-1$ factors)}$$
equals $h$.
\end{lemma}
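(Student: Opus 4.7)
The plan is to first choose a favourable $t$-socle, define $\bar h$ on a basis of the tensor product, and then verify that the resulting multilinear form restricts to $h$ on the Veronese image.

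First, extract from any $t$-socle a sub-collection $e_1,\ldots,e_m\in \cA$ whose Veronese images $\nu_{k,t}(e_1),\ldots,\nu_{k,t}(e_m)$ form a basis of $U:=\langle \nu_{k,t}(\cA)\rangle$. The tensors $\nu_{k,t}(e_{i_1})\otimes \cdots \otimes \nu_{k,t}(e_{i_{k-1}})$ with $1\le i_1,\ldots,i_{k-1}\le m$ then form a basis of $U^{\otimes k-1}$, so declaring $\bar h$ to take the value $g(e_{i_1},\ldots,e_{i_{k-1}})$ on each such basis tensor and extending by multilinearity produces a well-defined multilinear form on $U^{\otimes k-1}$. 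Since any multilinear form is determined by its values on $\nu_{k,t}(\cA)^{\times(k-1)}$, once the restriction statement below is verified, this construction will agree with the (a priori less-well-defined) recipe for $\bar h$ given in the excerpt for any $t$-socle.

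The substance of the lemma is therefore to show that for all $a_1,\ldots,a_{k-1}\in \cA$,
\[
\bar h\bigl(\nu_{k,t}(a_1)\otimes\cdots\otimes \nu_{k,t}(a_{k-1})\bigr)=g(a_1,\ldots,a_{k-1}).
\]
The engine is linearity of $g$ in its last slot via the Veronese map: by the definition (\ref{eqn:g}) together with the identity $f_S=h_S\circ \nu_{k,t}$, for any ordered $S=[a_1,\ldots,a_{k-2}]\subset \cA$,
\[
g(a_1,\ldots,a_{k-2},b)=(-1)^{s(t+1)}h_S(\nu_{k,t}(b)),
\]
with $h_S$ a linear form on $U$ depending only on the first $k-2$ entries. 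Expanding $\nu_{k,t}(a_{k-1})=\sum_i c_{k-1,i}\nu_{k,t}(e_i)$ in the chosen basis immediately yields $g(a_1,\ldots,a_{k-2},a_{k-1})=\sum_i c_{k-1,i}\, g(a_1,\ldots,a_{k-2},e_i)$.

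To propagate this linearity to any other slot $j<k-1$, apply Lemma~\ref{lem:g_function}: transpose the $j$-th and the last entries (picking up a factor $(-1)^{t+1}$), expand linearly in the last slot as above, then undo the transposition (another factor $(-1)^{t+1}$). The two signs cancel, yielding linearity in slot $j$ through $\nu_{k,t}$. Iterating slot-by-slot produces
\[
g(a_1,\ldots,a_{k-1})=\sum_{i_1,\ldots,i_{k-1}} c_{1,i_1}\cdots c_{k-1,i_{k-1}}\,g(e_{i_1},\ldots,e_{i_{k-1}}),
\]
which is precisely $\bar h\bigl(\nu_{k,t}(a_1)\otimes\cdots\otimes \nu_{k,t}(a_{k-1})\bigr)$. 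The main care needed is the sign bookkeeping, but since every transposition used in the propagation is immediately undone, the $(-1)^{t+1}$ factors always appear in pairs and cancel, so no parity obstruction arises.
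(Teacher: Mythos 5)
Your proposal is correct and follows essentially the same route as the paper: both arguments rest on the linearity of $g$ in its last slot through $h_S\circ\nu_{k,t}=f_S$, use Lemma~\ref{lem:g_function} to move an arbitrary slot into the last position and back so that the two factors $(-1)^{t+1}$ cancel, and then linearize one slot at a time by induction. The only difference is cosmetic: you fix a basis of $\langle\nu_{k,t}(\cA)\rangle$ up front to settle well-definedness of $\bar h$, which the paper leaves implicit.
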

\Proof
By definition, the function $\bar h$ is multilinear, and coincides with $h$ when evaluated at arguments of the form
$${\bf v}=\left ( \nu_{k,t}(e_{i_1}), \nu_{k,t}(e_{i_2}),\ldots, \nu_{k,t}(e_{i_{k-1}})\right ).$$
For each $x \in \cA$ with $\nu_{k,t}(x)=\sum_i \lambda_i \nu_{k,t}(e_i)$, and for each $j\in \{1,\ldots , k-2\}$, we have
$$
h\left ( \nu_{k,t}(e_{i_1}), \ldots, \nu_{k,t}(e_{i_{j-1}}),   \nu_{k,t}(x),\nu_{k,t}(e_{i_{j+1}}), \ldots, \nu_{k,t}(e_{i_{k-1}})\right )
$$
$$
=g(e_{i_1},\ldots,e_{i_{j-1}},x,e_{i_{j+1}},\ldots, e_{i_{k-2}},e_{i_{k-1}})
$$
$$
=(-1)^{t+1}g(e_{i_1},\ldots,e_{i_{j-1}},e_{i_{k-1}},e_{i_{j+1}},\ldots,e_{i_{k-2}},x )
$$
$$=(-1)^{t+1} h_E(\nu_{k,t}(x))
$$
$$
=(-1)^{t+1} \sum_i \lambda_i h_E(\nu_{k,t}(e_i))
$$
where $E=[e_{i_1},\ldots,e_{i_{j-1}},e_{i_{k-1}},e_{i_{j+1}},\ldots,e_{i_{k-2}}]$. This in turn equals
$$
\sum_i \lambda_i  (-1)^{t+1}  g(e_{i_1},\ldots,e_{i_{j-1}},e_{i_{k-1}},e_{i_{j+1}},\ldots,e_{i_{k-2}},e_i )
$$
$$
=  \sum_i \lambda_i g(e_{i_1},\ldots,e_{i_{j-1}}, e_i,e_{i_{j+1}},\ldots,e_{i_{k-2}},e_{i_{k-1}})
$$
$$
=\sum_i \lambda_i \bar h\left ( \nu_{k,t}(e_{i_1})\otimes \ldots \otimes \nu_{k,t}(e_{i_{j-1}})\otimes   \nu_{k,t}(e_i)\otimes\nu_{k,t}(e_{i_{j+1}})\otimes \ldots \otimes \nu_{k,t}(e_{i_{k-1}})\right )
$$
$$
=\bar h\left ( \nu_{k,t}(e_{i_1}) \otimes \ldots \otimes \nu_{k,t}(e_{i_{j-1}}) \otimes   \nu_{k,t}(x) \otimes \nu_{k,t}(e_{i_{j+1}}) \otimes \ldots \otimes \nu_{k,t}(e_{i_{k-1}})\right ).
$$
This shows that $\bar h$ and $h$ are equal when evaluated at arguments obtained from 
$${\bf v}=\left ( \nu_{k,t}(e_{i_1}), \nu_{k,t}(e_{i_2}),\ldots, \nu_{k,t}(e_{i_{k-1}})\right ),$$
by replacing the $j$-th argument in  $\bf v$ by $\nu_{k,t}(x)$ ($x\in \cA$). 

The proof can now be finished by induction. As induction hypothesis we assume that the values of $\bar h$ and $h$ are equal when evaluated at $(k-1)$-tuples obtained from $\bf v$ by replacing $s\geq 1$
of the arguments of ${\bf v}$ by $\nu_{k,t}(x_1), \ldots, \nu_{k,t}(x_s)$ for any $s$ points $x_1, \ldots, x_s \in \cA$.

Let $\bf w$ be obtained from $\bf v$, by replacing $s+1$
of the arguments of ${\bf v}$ by expressions of the form $\nu_{k,t}(x_1), \ldots, \nu_{k,t}(x_{s+1})$ where $x_1, \ldots, x_{s+1} \in \cA$.

If $\nu_{k,t}(x_{s+1})$ is not in the last position of $\bf w$, then define $\bf w'$ as the $(k-1)$-tuple obtained from $\bf w$ by interchanging the argument where $x_{s+1}$ appears with the argument in the last position.
Then, by Lemma~\ref{lem:g_function},
$$h({\bf w})=(-1)^{t+1}h({\bf w}').$$

If $\nu_{k,t}(x_{s+1})$ is in the last position of $\bf w$ then put $\bf w'=w$.

Then $h({\bf w'})=h_E(\nu_{k,t}(x_{s+1}))$ for a suitable $E$, and since $h_E$ is a linear form, we can rewrite $h({\bf w'})$ as a linear combination of evaluations of $h$ at $(k-1)$-tuples obtained from $\bf v$ by replacing $s$ arguments of $\bf v$ by expressions of the form $\nu_{k,t}(x_1), \ldots, \nu_{k,t}(x_{s})$ with $x_1, \ldots, x_{s} \in \cA$. By induction the values of $\bar h$ and $h$ are equal when evaluated at such $(k-1)$-tuples.
\qed

\section{Proof of Theorem~\ref{thm:form_existence}}
The previous sections contain the necessary lemma's to prove the main theorem. We restate the theorem for the convenience of the reader.

{\bf Theorem~\ref{thm:form_existence}}
{\em 
Let $\cA$ be an arc in $\PG(k-1,q)$ of size $q+k-1-t$ and let $\Phi_t[X]$ denote the space of homogeneous polynomials of degree $t$ in $X=(X_1,\ldots,X_k)$ which are zero on $\cA$. There exists
a homogeneous polynomial $F(Y_1,\ldots,Y_{k-1})$ (in $k(k-1)$ variables) where $Y_j=(Y_{j1},\ldots,Y_{jk})$, and $F$ is homogeneous of degree $t$ in each of the $k$-tuples of variables $Y_j$, with the following properties.

\begin{enumerate}
\item[(i)]
For every $(k-2)$-subset $S=[a_1,\ldots,a_{k-2}]$ of the arc $\cA$ we have 
$$
F(a_1,\ldots,a_{k-2},X)=(-1)^{s(t+1)}f_S(X) \mbox{ modulo } \Phi_t[X],
$$
where $s$ is the parity of the permutation which orders $S$ as in the ordering of $\cA$.
\item[(ii)]
For every sequence $a_1,\ldots,a_{k-1}$ of elements of $\cA$ in which points are repeated, 
$$
F(a_1,\ldots,a_{k-1})=0.
$$
\item[(iii)]
For every permutation $\sigma \in \mathrm{Sym}(k-1)$,
$$
F(Y_{\sigma(1)},\ldots,Y_{\sigma(k-1)})=(-1)^{s(t+1)}F(Y_1,\ldots,Y_{k-1}),
$$
modulo $\Phi_t[Y_1],\ldots,\Phi_t[Y_{k-1}]$, where $s$ is the parity of $\sigma$.
\item[(iv)]
Any form $F(Y_1,\ldots,Y_{k-1})$ satisfying {\rm(i), (ii)} and {\rm (iii)} is unique modulo $\Phi_t[Y_1]$, $\ldots,$ $\Phi_t[Y_{k-1}]$.
\end{enumerate}
}
\Proof

Let $\cA$ be an arc of size $q+k-1-t$ in $\PG(k-1,q)$. By Lemma \ref{lem:bar_h}, there exists a multilinear form
$\bar h$ on $\langle \nu_{k,t}(\cA)\rangle^{\otimes k-1}$, such that for all $a_1,\ldots,a_{k-1}\in \nu_{k,t}(\cA)$
$$
{\bar h}(\nu_{k,t}(a_1),\ldots, \nu_{k,t}(a_{k-1}))=g(a_1,\ldots,a_{k-1})=(-1)^{s(t+1)}f_S(a_{k-1}),
$$
where $S=[a_1,\ldots,a_{k-2}]$ is an ordered subset of $\cA$ and $s$ is the parity of the permutation which orders $S$ as in the ordering of $\cA$.

The multilinear form $\bar h$ corresponds to a
hyperplane $\bar \cH$ in $\langle \nu_{k,t}(\cA)\rangle^{\otimes k-1}$. 
Let $\cH$ be a hyperplane of $\langle \cV_{k,t}(\bF_q)\rangle^{\otimes k-1}$ intersecting
$\langle \nu_{k,t}(\cA)\rangle^{\otimes k-1}$ in $\bar \cH$. 

The hyperplane $\cH$ is the zero locus of a linear form $\alpha$ on $\langle \cV_{k,t}(\bF_q)\rangle^{\otimes k-1}$.
This defines $\alpha$ up to a nonzero scalar factor. Now scale $\alpha$ such that the restriction of $\alpha$ to 
$\langle \nu_{k,t}(\cA)\rangle^{\otimes k-1}$ coincides with $\overline h$ (which is possible since 
 $\cH\cap \langle \nu_{k,t}(\cA)\rangle^{\otimes k-1}=\bar \cH$).

Denote by $\varphi$ the polynomial map from
$$\PG(k-1,q)\times \ldots \times \PG(k-1,q) \longrightarrow \PG(N^{k-1}-1,q),$$
where $N={{k+t-1}\choose{t}}$,
obtained as the composition of first applying the Veronese map 
$$\nu_{k,t}~:~\PG(k-1,q) \longrightarrow \PG(N-1,q),$$
in each of the $k-1$ factors, 
and then applying the Segre embedding
$$\sigma~:~\PG(N-1,q)\times \ldots \times \PG(N-1,q) \longrightarrow \PG(N^{k-1}-1,q).$$
Define $F$ as the polynomial map $\alpha \circ \varphi$.
It follows that $F$ is a homogeneous polynomial $F(Y_1,\ldots,Y_{k-1})$ where $Y_j=(Y_{j1},\ldots,Y_{jk})$,
which is homogeneous (of degree $t$) in each of the $Y_j$'s. Moreover, 
$$F(a_1,\ldots,a_{k-1})=(\alpha \circ \varphi)(a_1,\ldots,a_{k-1})=g(a_1,\ldots,a_{k-1})$$

For an ordered subset $S=[a_1,\ldots,a_{k-2}]$ of $\cA$, consider
$$
H(X):=F(a_1,\ldots,a_{k-2},X)-(-1)^{s(t+1)}f_S(X),
$$
a homogeneous polynomial of degree $t$.

The  polynomial $H(X)$ vanishes at the points of $\cA$ and therefore belongs to $\Phi_t[X]$, which proves (i).

For $S=[a_1,\ldots,a_{k-2}]$, where $a_i \in \cA$ and for which one of the $a_i$'s is repeated,
$$
F(a_1,\ldots,a_{k-2},a_{k-1})=g(a_1,\ldots,a_{k-2},a_{k-1})=0,
$$
which proves (ii).

To prove (iii) it suffices to prove that
$$
F(X_2,X_1,X_3,\ldots,X_{k-1})=(-1)^{t+1}F(X_1,X_2,X_3,\ldots,X_{k-1}) \pmod{\Phi_t[X_1],\ldots,\Phi_t[X_{k-1}]},
$$
the other transpositions following by the same argument.

By induction on $r$ we will prove that
$$
F(a_1,a_2,a_3,\ldots,a_r,X_{r+1},\ldots,X_{k-1})=(-1)^{t+1}F(a_2,a_1,a_3,\ldots,a_r,X_{r+1},\ldots,X_{k-1})$$
modulo $(\Phi_t[X_{r+1}],\ldots,\Phi_t[X_{k-1}])$.

This holds for $r=k-1$ (in which case there are no $X_i$'s) by Lemma~\ref{lem:g_function}.

By induction, whenever we evaluate at $X_r=a_r \in \cA$, the polynomial
$$
F(a_1,a_2,a_3,\ldots,a_{r-1},X_r,X_{r+1},\ldots,X_{k-1})-(-1)^{t+1}
F(a_2,a_1,a_3,\ldots,a_{r-1},X_r,X_{r+1},\ldots,X_{k-1})
$$
is zero modulo $(\Phi_t[X_{r+1}],\ldots,\Phi_t[X_{k-1}])$. Hence, it is zero modulo $(\Phi_t[X_{r}],\ldots,\Phi_t[X_{k-1}])$, which proves (iii).


To prove (iv), suppose that both $F$ and $G$ are polynomials satisfying (i), (ii) and (iii). Then 
$$
F(a_1,\ldots,a_{k-2},Y_{k-1})=G(a_1,\ldots,a_{k-2},Y_{k-1}) \pmod{\Phi_t[Y_{k-1}]}.
$$
for any $[ a_1,\ldots,a_{k-2}]$, where $a_j \in \cA$ are possibly repeated.

We proceed by induction. Suppose that for all $[ a_1,\ldots,a_{r}]$, where $a_j \in \cA$ are possibly repeated,
$$
F(a_1,\ldots,a_{r},Y_{r+1},\ldots,Y_{k-1})=G(a_1,\ldots,a_{r},Y_{r+1},\ldots,Y_{k-1})  \pmod{\Phi_t[Y_{r+1}],\ldots,\Phi_t[Y_{k-1}]}.
$$

Then, evaluating $Y_r$ at any point of $\cA$, the polynomial
$$
F(a_1,\ldots,a_{r-1},Y_{r},\ldots,Y_{k-1})-G(a_1,\ldots,a_{r-1},Y_{r},\ldots,Y_{k-1}),
$$
is zero $\pmod{\Phi_t[Y_{r+1}],\ldots,\Phi_t[Y_{k-1}]}$,
which implies that
$$
F(a_1,\ldots,a_{r-1},Y_{r},\ldots,Y_{k-1})=G(a_1,\ldots,a_{r-1},Y_{r},\ldots,Y_{k-1})  \pmod{\Phi_t[Y_{r}],\ldots,\Phi_t[Y_{k-1}]}.
$$
This complete the proof.
\qed

\begin{definition}
The multi-homogeneous polynomial $F(Y_1,\ldots,Y_{k-1})$ where $Y_j=(Y_{j1},\ldots,Y_{jk})$, which is homogeneous (of degree $t$) in each of the $Y_j$'s,
is called a {\em tensor form of $\cA$}. Note that a tensor form of an arc $\cA$ is unique modulo
$\Phi_t[Y_1],\ldots,\Phi_t[Y_{k-1}]$.
\end{definition}

\section{Hypersurfaces containing an arc}

Suppose $q=p^h$, where $p$ is an odd prime. Let $\cA$ be an arc in $\PG(k-1,q)$ of size $q+k-1-t$ and let $S$ be a subset of $\cA$ of size $k-3$. Projecting $\cA$ from $S$ one obtains a planar arc and the results from \cite{BaLa2018} apply. These results imply that $\cA$ is contained in a hypersurface of degree $t+p^{\lfloor \log_p t \rfloor}$, which is the cone of a planar curve of degree $t+p^{\lfloor \log_p t \rfloor}$ and the subspace $\langle S \rangle$.

The following theorem implies that there may be more hypersurfaces containing $\cA$. Indeed, we will consider a specific example in which Theorem~\ref{morehyps} tells us more than what we obtain from simply projecting.

In the following theorem, $X^{(i_{1},\ldots,i_{k})}=X_1^{i_1}\cdots X_k^{i_k}$. 

We do not claim in Theorem~\ref{morehyps} that there are coefficients which are not zero. In \cite{BaLa2018} it was proven that if $k=3$ and $q$ is odd then there are non-zero coefficients and therefore low-degree polynomials which are zero on $\cA$. We will prove that there are non-zero coefficients for $q$ odd when $t=2$ and $k=4$ in Theorem~\ref{therestwoquadrics}.


\begin{theorem} \label{morehyps}
Let $\cA$ be an arc in $\PG(k-1,q)$ of size $q+k-1-t$ and let $F(Y_1,\ldots,Y_{k-1})$ be the $(t,t,\ldots, t)$-form whose existence is given by Theorem~\ref{thm:form_existence}. 
If $\cA$ is not contained in a hypersurface of degree $t$ then the coefficient of $Y_1^{i_1}\cdots Y_{k-2}^{i_{k-2}}$, where $i_m=(i_{m1},\ldots,i_{mk})$, in
$$
F(Y_1+X,\ldots,Y_{k-2}+X,X)-F(Y_1,\ldots,Y_{k-2},X)
$$
is either zero or a homogeneous polynomial in $X$ of degree
$$
(k-1)t-\sum_{m=1}^{k-2} \sum_{j=1}^k i_{mj},
$$
which is zero on $\cA$.
\end{theorem}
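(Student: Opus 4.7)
The plan is to reduce the statement to the polynomial identity
\[ F(Y_1+a,\ldots,Y_{k-2}+a,a)=F(Y_1,\ldots,Y_{k-2},a), \]
viewed as an identity of polynomials in $(Y_1,\ldots,Y_{k-2})$, to be established for every $a\in\cA$. Once this is in hand, extracting the coefficient of $Y_1^{i_1}\cdots Y_{k-2}^{i_{k-2}}$ on both sides shows that the coefficient polynomial $P_{\mathbf{i}}(X)$ appearing in the theorem satisfies $P_{\mathbf{i}}(a)=0$ for every $a\in\cA$. The homogeneity claim that $P_{\mathbf{i}}$ has degree $(k-1)t-\sum_m|i_m|$ is a direct consequence of the multi-homogeneity of $F$: each of the $k-1$ arguments contributes degree $t$, the $Y$-monomial accounts for $\sum_m|i_m|$ of the total degree, and the remainder is in $X$.

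Since $\cA$ is not contained in a hypersurface of degree $t$, we have $\Phi_t=0$ in each $k$-tuple of variables, so properties (i) and (iii) of Theorem~\ref{thm:form_existence} hold as exact equalities. Fix $a\in\cA$. I would first use (iii) to move $a$ from the last argument to the first, reducing the displayed identity to $F(a,Y_1+a,\ldots,Y_{k-2}+a)=F(a,Y_1,\ldots,Y_{k-2})$. Iterating one-argument shifts of the form $F(a,\ldots,Y_j+a,\ldots)=F(a,\ldots,Y_j,\ldots)$, and using (iii) to transpose positions $j+1$ and $k-1$ (fixing position $1$), the task then reduces to the single identity
\[ F(a,Y_1,\ldots,Y_{k-3},Y_{k-2}+a)=F(a,Y_1,\ldots,Y_{k-3},Y_{k-2}). \]

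To prove this last identity I would first verify it after specialising $Y_j=a_j\in\cA$ for $j=1,\ldots,k-3$. If $a,a_1,\ldots,a_{k-3}$ are pairwise distinct they form an ordered $(k-2)$-subset $S$ of $\cA$, and property (i) yields $F(a,a_1,\ldots,a_{k-3},Y_{k-2})=\pm f_S(Y_{k-2})$ as an exact equality. The crucial point is that the $t$ linear forms whose product is $f_S$ each vanish at $a\in S$; hence $f_S(Y_{k-2}+a)=f_S(Y_{k-2})$. If instead $(a,a_1,\ldots,a_{k-3})$ has a repeat, then by (ii) the tuple $(a,a_1,\ldots,a_{k-3},y)$ evaluates $F$ to $0$ for every $y\in\cA$, so $F(a,a_1,\ldots,a_{k-3},Y_{k-2})$ is a degree-$t$ polynomial in $Y_{k-2}$ vanishing on $\cA$, hence identically zero; the shifted polynomial vanishes identically for the same reason.

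It follows that the difference $R(Y_1,\ldots,Y_{k-2}):=F(a,Y_1,\ldots,Y_{k-3},Y_{k-2}+a)-F(a,Y_1,\ldots,Y_{k-2})$ vanishes whenever $(Y_1,\ldots,Y_{k-3})\in\cA^{k-3}$. Writing $R=\sum_L Y_{k-2}^L R_L(Y_1,\ldots,Y_{k-3})$ with each $R_L$ multi-homogeneous of multi-degree $(t,\ldots,t)$, a routine induction on $m$ shows that any polynomial in $m$ variable-blocks, multi-homogeneous of multi-degree $(t,\ldots,t)$, which vanishes on $\cA^m$ is identically zero; the induction step uses $\Phi_t=0$ to force a degree-$t$ polynomial in one block vanishing on $\cA$ to be zero. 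Hence every $R_L\equiv 0$, $R\equiv 0$, and the single-argument shift invariance follows. The main obstacle is precisely this lifting from the identity on specialisations in $\cA^{k-3}$ to a full polynomial identity in the formal variables $Y_j$: this is where the hypothesis $\Phi_t=0$ plays its essential role.
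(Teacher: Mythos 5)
Your proposal is correct, and it reaches the same key identity $F(Y_1+a,\ldots,Y_{k-2}+a,a)=F(Y_1,\ldots,Y_{k-2},a)$ as the paper does, but by a genuinely different route. The paper projects $\cA$ from $x$, applies Theorem~\ref{thm:form_existence} to the projected arc to get a tensor form $F_x$, observes that $F_x$ is invariant under $Y_m\mapsto Y_m+x$ because the substituted variables $Z_{mi}=x_jY_{mi}-x_iY_{mj}$ are themselves invariant, and then identifies $F_x$ with $F(Y_1,\ldots,Y_{k-2},x)$ via the uniqueness clause (iv). You instead derive the identity intrinsically from properties (i)--(iii) of $F$: you isolate the elementary observation that every linear factor $\alpha_i$ of $f_S$ vanishes at each point of $S$, so $f_S(X+a)=f_S(X)$ for $a\in S$, handle repeated entries with (ii), and then lift the identity from specialisations in $\cA^{k-3}$ to a formal polynomial identity via the (correct) induction showing that a multi-degree-$(t,\ldots,t)$ form vanishing on $\cA^m$ is zero when $\Phi_t=\{0\}$. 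What the paper's route buys is brevity and a transparent geometric explanation of the translation invariance (projection from $x$ annihilates the $x$-direction), at the cost of re-invoking the whole construction for the projected arc and a slightly delicate identification of a form in $(k-1)$-tuples with one in $k$-tuples. Your route buys self-containedness -- everything is deduced from the stated properties of $F$ together with $\Phi_t=\{0\}$ -- and makes explicit where the hypothesis that $\cA$ lies on no degree-$t$ hypersurface is used, namely in upgrading (i) and (iii) to exact equalities and in the lifting lemma. Both arguments are sound.
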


\begin{proof}
Let $x \in \cA$ and let $F_x(Y_1,\ldots,Y_{k-2})$ be a tensor form of the 
projection $\overline \cA$ of $\cA$ from $x$ (obtained by applying Theorem~\ref{thm:form_existence}). Explicitly, this can be done in the following way. 

Choose a coordinate $j$ such that $x_j \neq 0$. For each $a=(a_1,\ldots,a_k)\in \cA$, define a point $\overline a$ of $\PG(k-2,q)$, whose $i$-th coordinate is $a_ix_j-a_jx_i$, for $i\neq j$. 
So $\overline a$ has no $j$-th coordinate.

Let 
$$
\overline \cA=\{ \overline a \ | \ a \in \cA \setminus \{ x\} \}.
$$
Theorem~\ref{thm:form_existence} implies the existence of a form $G(Z_1,\ldots,Z_{k-2})$ for $\overline \cA$. 
Note that each $Z_{mi}$ has no $j$-th coordinate.
Then define $F_x$ as the polynomial obtained from $G$ by 
substituting $Z_{mi}=x_jY_{mi}-x_iY_{mj}$ for $i\neq j$.

Since $Z_{mi}$ is unaffected by the substitution $Y_{mi} \mapsto Y_{mi}+x_i$
$$
F_x(Y_1+x,\ldots,Y_{k-2}+x)=F_x(Y_1,\ldots,Y_{k-2}).
$$

Both $F_x(Y_1,\ldots,Y_{k-2})$ and $F(Y_1,\ldots,Y_{k-2},x)$ satisfy all the properties of the tensor form obtained by applying Theorem~\ref{thm:form_existence} to the arc $\overline \cA$, apart from the fact that each $Y_j$ is a $k$-tuple and not a $(k-1)$-tuple. However, the same uniqueness argument used in part (iv) of the proof of Theorem 1 applies, so they are the same.

Therefore, for all $x \in \cA$,
$$
F(Y_1+x,\ldots,Y_{k-2}+x,x)=F(Y_1,\ldots,Y_{k-2},x),
$$
from which the theorem follows.
\end{proof}

We now consider an example which illustrates that Theorem~\ref{morehyps} can prove the existence of hypersurfaces containing $\cA$ which are not obtained simply by projection.

\begin{theorem} \label{therestwoquadrics}
If $q$ is odd then an arc of size $q+1$ in $\mathrm{PG}(3,q)$ is contained in a quadric.
\end{theorem}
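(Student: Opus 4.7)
In the notation of the paper, the hypothesised arc sits in $\PG(k-1,q)$ with $k=4$ and has size $q+k-1-t$ with $t=2$, so Theorem~\ref{thm:form_existence} supplies a tensor form $F(Y_1,Y_2,Y_3)$ of tri-degree $(2,2,2)$. Because $(-1)^{s(t+1)}=(-1)^s$, property~(iii) says that $F$ is alternating under $\mathrm{Sym}(3)$ modulo $\Phi_2[Y_1]$, $\Phi_2[Y_2]$ and $\Phi_2[Y_3]$. I would argue by contradiction: assume no quadric vanishes on $\cA$, so $\Phi_2[X]=0$. Then $F$ is strictly alternating; setting $Y_1=Y_2=Y$ gives $2F(Y,Y,Z)=0$, and hence (since $q$ is odd) $F(Y,Y,Z)\equiv 0$ in $\bF_q[Y,Z]$. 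The hypothesis of Theorem~\ref{morehyps} is then satisfied, and that theorem guarantees that the coefficient of each monomial $Y_1^{i_1}Y_2^{i_2}$ in $F(Y_1+X,Y_2+X,X)-F(Y_1,Y_2,X)$ is a form in $X$ of degree $6-|i_1|-|i_2|$ that vanishes on $\cA$.

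The next step is to exploit projections. Fix $x\in\cA$; the projected arc $\overline\cA_x\subset\PG(2,q)$ is a $q$-arc, and by the identification used in the proof of Theorem~\ref{morehyps} (i.e.\ the uniqueness statement in part~(iv) of Theorem~\ref{thm:form_existence}), the restriction $F(Y_1,Y_2,x)$ is the lift of the planar tensor form of $\overline\cA_x$. The planar case $k=3$, $t=2$, $q$~odd is handled in detail in~\cite{BaLa2018}, where non-zero coefficients in the planar analogue of Theorem~\ref{morehyps} are exhibited; pulling these back via the cone-from-$x$ construction discussed at the beginning of this section yields a cubic surface through $\cA$ for each $x\in\cA$. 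The essential extra information beyond projection comes from Theorem~\ref{morehyps} applied to $\cA$ itself: its mixed coefficient identities couple the restrictions at different $x\in\cA$. My plan is to combine these mixed identities with the planar cubics, the alternation of $F$ and the polynomial identity $F(Y,Y,Z)\equiv 0$ to produce a linear combination of the coefficient polynomials of Theorem~\ref{morehyps} that collapses to a non-trivial quadric on $\bF_q^4$ vanishing on $\cA$---a direct contradiction with $\Phi_2[X]=0$.

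The hard part is this final collapse. The oddness of $q$ is used twice: first to force the polynomial identity $F(Y,Y,Z)\equiv 0$ (which requires $2$ to be invertible and depends on the sign $(-1)^{s(t+1)}=(-1)^s$, occurring only because $t+1=3$ is odd), and second to access the planar result of~\cite{BaLa2018} on non-zero coefficients. A careful coefficient-chasing is required to verify that some explicit combination of the degree-$3$ coefficient polynomials from Theorem~\ref{morehyps}, after invoking alternation and the planar cubics, does not merely reproduce another cubic but actually reduces to a non-zero quadric.
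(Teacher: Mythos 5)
Your setup is sound and in fact matches the paper's strategy: argue by contradiction, take the $(2,2,2)$-tensor form $F$ from Theorem~\ref{thm:form_existence}, observe that since $t+1=3$ is odd and $\Phi_2[X]=\{0\}$ the form is genuinely alternating, and feed the coefficient polynomials supplied by Theorem~\ref{morehyps} back into the argument. But the proposal stops exactly where the work begins. Everything after ``My plan is to combine\ldots'' is a declaration of intent rather than an argument: you never identify \emph{which} coefficients of $F(Y_1+X,Y_2+X,X)-F(Y_1,Y_2,X)$ to extract, never show that any of them is a \emph{non-zero} form vanishing on $\cA$, and never exhibit the combination that drops the degree from $3$ to $2$. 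That final collapse is the entire content of the theorem. In the paper it is carried out explicitly: with $e_1,e_2$ the first two points of $\cA$ taken as coordinate points, one first notes (from property (i) applied to $S=\{e_1,e_2\}$ together with the alternation $b_{j_1,j_2,j_3}=-b_{j_1,j_3,j_2}$) that $b_{(2,0,0,0),j_2,j_3}=0$ whenever $j_{11}\neq 0$ or $j_{21}\neq 0$; then the coefficient of $Y_1^{(2,0,0,0)}Y_2^{(0,1,0,0)}$ produces a cubic $g=2X_2f_{e_1e_2}(X_3,X_4)+(\cdots)X_3^2X_4+(\cdots)X_3X_4^2$ vanishing on $\cA$, and the coefficient of $Y_1^{(0,2,0,0)}Y_2^{(1,0,0,0)}$ produces $h=2X_1f_{e_1e_2}(X_3,X_4)+(\cdots)X_3^2X_4+(\cdots)X_3X_4^2$, both non-zero because $q$ is odd (so $2\neq 0$) and $f_{e_1e_2}\neq 0$. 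The decisive observation is that in $X_1g-X_2h$ the two terms $2X_1X_2f_{e_1e_2}(X_3,X_4)$ cancel, so $X_1g-X_2h$ is divisible by $X_3X_4$ and the quotient is a non-zero quadric vanishing on $\cA$, contradicting $\Phi_2[X]=\{0\}$. Locating these two particular coefficients and seeing the cancellation is the key idea of the proof, and it is absent from your write-up.

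Two of your auxiliary steps are also red herrings: the polynomial identity $F(Y,Y,Z)\equiv 0$ and the appeal to the non-zero-coefficient results of \cite{BaLa2018} for planar arcs play no role in the actual argument, which is self-contained once $g$ and $h$ are written down. They do not substitute for the missing coefficient computation, which is where both occurrences of the hypothesis that $q$ is odd are genuinely used.
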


\begin{proof}
Suppose that $\cA$ is an arc of $\mathrm{PG}(3,q)$ of size $q+1$ not contained in a quadric. Then $k=4$, $t=2$ and $\Phi_t=\{ 0 \}$.

By Theorem~\ref{thm:form_existence}, there is a $(2,2,2)$-form
$$
F(Y_1,Y_2,Y_3)=\sum_{j_1,j_2,j_3} b_{j_1,j_2,j_3} Y_1^{j_1} Y_2^{j_2} Y_3^{j_3}, 
$$
where the sum runs over all $j_m=(j_{m1},j_{m2},j_{m3},j_{m4})$ such that $j_{m1}+j_{m2}+j_{m3}+j_{m4}=2$, with the properties therein stated.

Since $t+1$ is odd, and $\Phi_t=\{ 0 \}$, Theorem~\ref{thm:form_existence} (iii) implies
$$
b_{j_1,j_2,j_3}=-b_{j_1,j_3,j_2}.
$$
Since $F_{e_1}(Y_1,Y_2)$ has no $Y_{11}$ or $Y_{21}$ terms
$$
b_{(2,0,0,0),j_2,j_3}=0,
$$ 
if $j_{11}\neq0$ or $j_{21}\neq0$.

Applying Theorem~\ref{morehyps} to the coefficient $Y_1^{(2,0,0,0)}Y_2^{(0,1,0,0)}$, we have that the polynomial
\begin{equation} \label{hyp1}
g:=2X_2f_{e_1e_2}(X_3,X_4)+(b_{(2,0,0,0),(0,1,1,0),(0,0,1,1)}+b_{(2,0,0,0),(0,1,0,1),(0,0,2,0)})X_3^2X_4
\end{equation}
$$
+(b_{(2,0,0,0),(0,1,0,1),(0,0,1,1)}+b_{(2,0,0,0),(0,1,1,0),(0,0,0,2)})X_3X_4^2
$$
defines a hypersurface containing $\cA$. 

Note that it is not zero, since $q$ is odd and 
$f_{e_1e_2}(X_3,X_4) \neq 0$.

Applying Theorem~\ref{morehyps} to the coefficient $Y_1^{(0,2,0,0)}Y_2^{(1,0,0,0)}$, we have that the polynomial
\begin{equation} \label{hyp2}
h:=2X_1f_{e_1e_2}(X_3,X_4)+(b_{(0,2,0,0),(1,0,1,0),(0,0,1,1)}+b_{(0,2,0,0),(1,0,0,1),(0,0,2,0)})X_3^2X_4
\end{equation}
$$+(b_{(0,2,0,0),(1,0,0,1),(0,0,1,1)}+b_{(0,2,0,0),(1,0,1,0),(0,0,0,2)})X_3X_4^2
$$
defines a hypersurface containing $\cA$. 

Then dividing $X_1g-X_2h$ by $X_3X_4$ we have that there is a polynomial
$$
c_{13}X_1X_3+c_{14}X_1X_4+c_{23}X_2X_3+c_{24}X_2X_4,
$$
which is zero on $\cA$. Again, this polynomial is not zero since this would imply that $2X_2f_{e_1e_2}(X_3,X_4)$ is zero on $\cA$, which it is not.

Hence, $\cA$ is contained in a quadric.
\end{proof}

\section{The Segre-Blokhuis-Bruen-Thas hypersurface} \label{BBThyp} \label{evensection}

In this section we elaborate on the hypersurface associated to an arc of hyperplanes in $\PG(k-1,q)$ obtained in \cite{Segre1967} for $k=3$, in \cite{BBT1988} for $k=4,5$, and \cite{BBT1990} for arbitrary dimension $k\geq 3$.
We will give a new proof for its existence and compare this result with Theorem 1.

For $j=1,\ldots,k-1$ consider $X_j=(X_{j1},\ldots,X_{jk})$ as a $k$-tuple of indeterminates.
We denote by 
$$\det_i(X_1,\ldots,X_{k-1})$$ 
the determinant of the matrix which is obtained from the matrix with the $X_j$'s as rows and the $i$-th column deleted.

The main theorem of \cite{BBT1990} implies that there is a homogeneous polynomial $\phi(Z_1,\ldots,Z_k)$ of degree $t$ for $q$ even and of degree $2t$ for $q$ odd, which vanishes at the points of the dual space which are dual to the hyperplanes containing exactly $k-2$ points of an arc $\cA$. We paraphrase the main result of \cite{BBT1990} as follows.

\begin{theorem} \label{thm:SBBT}
Let $m\in \{1,2 \}$ be such that $m-1 \equiv q$ mod $2$. If $\cA$ is an arc in $\PG(k-1,q)$ of size $q+k-1-t$, where $|\cA| \geq mt+k-1$, then there is a homogeneous polynomial in $k$ variables $\phi(Z_1,\ldots,Z_k)$, of degree $mt$, which gives a polynomial $G(X_1,\ldots,X_{k-1})$ in $k(k-1)$ indeterminates under the substitution $Z_j=\det_j(X_1,\ldots,X_{k-1})$, with the property that for each $(k-2)$-subset $S=\{ y_1,\ldots,y_{k-2} \}$ of $\cA$
$$
G(y_1,\ldots,y_{k-2},X)=(f_S(X))^m.
$$ 
\end{theorem}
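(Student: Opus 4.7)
The plan is to derive Theorem~\ref{thm:SBBT} from Theorem~\ref{thm:form_existence} by showing that a suitable power of the tensor form of $\cA$ factors through the Pl\"ucker-type morphism
$\mu \colon (X_1,\ldots,X_{k-1}) \mapsto (\det_1(X_1,\ldots,X_{k-1}),\ldots,\det_k(X_1,\ldots,X_{k-1}))$.
First I would apply Theorem~\ref{thm:form_existence} to produce a tensor form $F(Y_1,\ldots,Y_{k-1})$ of multidegree $(t,\ldots,t)$ and set $G:=F^m$ where $m\in\{1,2\}$ is as in the statement. The desired evaluation $G(y_1,\ldots,y_{k-2},X)=(f_S(X))^m$ is then immediate from Theorem~\ref{thm:form_existence}(i): the factor $(-1)^{s(t+1)}$ disappears after raising to the $m$-th power because $m(t+1)$ is always even under the hypothesis $m-1\equiv q\pmod{2}$ (either $m=2$, or we are in characteristic two).

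The heart of the proof is to show that $G$ descends through $\mu$, i.e.\ that there is a homogeneous polynomial $\phi$ of degree $mt$ in $k$ variables with $G(X_1,\ldots,X_{k-1})\equiv \phi\circ\mu(X_1,\ldots,X_{k-1})$ modulo the ideal generated by the spaces $\Phi_{mt}[Y_j]$. A first necessary condition is that $G$ transforms under every permutation $\sigma\in \mathrm{Sym}(k-1)$ of its arguments by the sign character $(-1)^{s\cdot mt}$, matching the transformation of $\phi\circ\mu$ (since each $\det_j$ is alternating and $\phi$ is homogeneous of degree $mt$). By Theorem~\ref{thm:form_existence}(iii), $G$ transforms instead by $(-1)^{sm(t+1)}$, and this agrees with $(-1)^{smt}$ precisely when $m(t+1)\equiv mt \pmod 2$, which is exactly the parity hypothesis. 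This consistency check is what singles out the values of $m$ prescribed by the theorem.

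The main obstacle is to promote this permutation symmetry to the full $\mathrm{SL}_{k-1}$-invariance needed for descent through $\mu$, whose fibres are generically the orbits of the right action $Y\mapsto YM$ on the $(k-1)\times k$ matrix of arguments. Invariance under transpositions has been arranged above, so the remaining task is to check invariance under elementary shears $Y_i\mapsto Y_i+\lambda Y_j$. I would verify this by polarising the scaled coordinate-free lemma of tangents (Lemma~\ref{lem:g_function}), which controls how $F$ behaves when its arguments are linearly combined along $\cA$, and by applying the uniqueness part of Theorem~\ref{thm:form_existence}(iv) to force $G$ into the Pl\"ucker subring modulo the relevant $\Phi_{mt}$-ideals. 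The size hypothesis $|\cA|\geq mt+k-1$ enters here by guaranteeing that the interpolating $\phi$ is consistently determined across the $(k-2)$-subsets of $\cA$. Once $\phi$ is constructed, the concluding evaluation property is exactly the content of the first paragraph, and this completes the proof.
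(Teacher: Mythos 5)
Your proposal takes a genuinely different route from the paper, but it contains two real gaps. The first and most concrete is the ``modulo $\Phi_t[X]$'' issue. Theorem~\ref{thm:form_existence}(i) only gives $F(a_1,\ldots,a_{k-2},X)=(-1)^{s(t+1)}f_S(X)$ \emph{modulo} $\Phi_t[X]$, so setting $G:=F^m$ yields $G(a_1,\ldots,a_{k-2},X)=(f_S(X))^m$ only up to a polynomial of degree $mt$ vanishing on $\cA$, not the exact identity that Theorem~\ref{thm:SBBT} asserts. Removing that defect is precisely the nontrivial content of the statement: the paper itself remarks after Theorem~\ref{thm:q_even_odd} that for $q$ even the result is an \emph{improvement} on Theorem~\ref{thm:form_existence} exactly because the modulo condition disappears. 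Your claim that the evaluation property ``is then immediate from Theorem~\ref{thm:form_existence}(i)'' therefore fails whenever $\Phi_t[X]\neq\{0\}$.

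The second gap is the descent through $Z_j=\det_j(X_1,\ldots,X_{k-1})$. Sign-compatibility under permutations of the arguments is only a necessary condition; to place $G$ in the subring generated by the $\det_j$'s you would need genuine $\mathrm{SL}_{k-1}$-invariance (under the left action on the matrix whose rows are the $Y_j$, not the right action you describe) as an \emph{exact} polynomial identity, followed by a first-fundamental-theorem argument, and the lemma of tangents only controls evaluations at points of $\cA$, i.e.\ gives information modulo the $\Phi$-ideals once again. ``Polarising Lemma~\ref{lem:g_function}'' is not a proof of shear invariance. The paper sidesteps all of this by constructing $G$ directly as the Lagrange-type interpolation (\ref{eqn:G}) over the $(k-1)$-subsets $T$ of a fixed subset $E\subseteq\cA$ of size $mt+k-1$ (which is where the hypothesis $|\cA|\geq mt+k-1$ is actually used), with basis functions $\prod_{u\in E\setminus T}\det(X_1,\ldots,X_{k-1},u)$ that are manifestly homogeneous of degree $mt$ in the $\det_j$'s; the exact evaluation property is then verified first for $S\subseteq E$ by comparing two binary forms of degree $mt$ at sufficiently many points, and then for general $S$ by induction on $|S\setminus E|$ using the scaled coordinate-free lemma of tangents. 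If you wish to pursue your route, you would at minimum have to show that a representative of $F^m$ can be chosen exactly inside the Pl\"ucker subring with exact evaluations on $\cA$, which is essentially equivalent to what the paper proves directly.
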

\begin{proof}
Order the arc $\cA$ arbitrarily and let $E$ be a subset of $\cA$ of size $mt+k-1$. Define
\begin{eqnarray}\label{eqn:G}
G(X_1,\ldots,X_{k-1})=\sum_{T} \left ( f_{T\setminus \{a_{k-1}\}}(a_{k-1})\right )^m \prod_{u \in E \setminus T} \frac{\det(X_1,\ldots,X_{k-1},u)}{\det(a_1,\ldots,a_{k-1},u)}.
\end{eqnarray}
where the sum runs over subsets $T=\{a_1,\ldots,a_{k-1}\}$ of $E$.

Observe that $G$ can be obtained from a homogeneous polynomial of degree $mt$ in $Z_1,\ldots,Z_k$ under the change of variables
$Z_j=\det_j(X_1,\ldots,X_{k-1})$.

For $S=\{y_1,\ldots,y_{k-2}\}$ define
$$h_S(X):=G(y_1,\ldots,y_{k-2},X).$$

Note that $h_S(X)$ is well-defined since any reordering of $S$ can only ever multiply $h_S(X)$ by $(-1)^{mt}=1$.

For $S\subset E$, the only nonzero terms in $h_S(X)$ are obtained for subsets $T$ of $E$ containing
$S$. Therefore
$$
h_S(X)=\sum_{a\in E\setminus S} \left ( f_{S}(a)\right )^m \prod_{u \in E \setminus (S\cup\{a\})} \frac{\det(y_1\ldots,y_{k-2},X,u)}{\det(y_1,\ldots,y_{k-2},a,u)}.
$$
The evaluation of $h_S(X)$ at $x\in E$ is equal to zero if $x\in S$ and equal to $(f_S(x))^m\neq 0$ otherwise. Since, with respect to a basis containing $S$ both $f_S^m$ and $h_S$ are homogeneous polynomials in two variables of degree $mt$, we conclude that $h_S=f_S^m$.

If $S$ is not contained in $E$ then we proceed by induction on the size of $S\setminus E$. As induction hypothesis we assume that for each subset $S$ with $S\setminus E$ of size $r$ the polynomials $h_S$ and $f_S^m$ are equal. Let $S=\{y_1,\ldots,y_{k-2}\}$ be such that $S\setminus E$ is of size $r+1$. W.l.o.g. assume $y_{k-1}\notin E$. Then for $x\in E$ we have
$$
h_S(x)=(-1)^{mt}h_{S'}(y_{k-1})=h_{S'}(y_{k-1}),
$$
where $S'$ is the set obtained from $S$ by replacing the $(k-1)$-th element $y_{k-1}$ of $S$ by $x$.
On the other hand, by the definition (\ref{eqn:g}) of $g$ and the scaled coordinate-free lemma of tangents, we have
$$
(f_S(x))^m=(g(y_1,\ldots,y_{k-1},x))^m=(g(y_1,\ldots ,y_{k-2},x,y_{k-1}))^m=(f_{S'}(y_{k-1}))^m.
$$
By induction $h_{S'}(y_{k-1})=(f_{S'}(y_{k-1}))^m$, and therefore the polynomials $h_S$ and $f_S^m$ have the same evaluation at points in $E$. Applying the same argument as in the case where $S\subset E$ we obtain $h_S=f_S^m$.
\end{proof}

We now compare Theorem \ref{thm:SBBT} to Theorem~\ref{thm:form_existence}. First, observe that the polynomial $G$ as defined in (\ref{eqn:G}) is homogeneous of degree $mt$ in each of its $k$-tuples of variables, and $G$ takes the value zero when evaluated at an argument which contains repeated points. Next, by Theorem \ref{thm:SBBT}, for any subset $S=\{a_1,\ldots,a_{k-2}\}$ of $\cA$ we have $G(a_1,\ldots,a_{k-1},X)=(f_S(X))^m$.
Also, as we already explained in the proof of Theorem \ref{thm:SBBT}, it follows from the scaled coordinate-free lemma of tangents that the polynomial $G$ is not affected by reordering of the points in its arguments. We obtain the following theorem.

\begin{theorem} \label{thm:q_even_odd}
Let $m\in \{1,2 \}$ be such that $m-1 \equiv q$ mod $2$. If $\cA$ is an arc in $\PG(k-1,q)$ of size $q+k-1-t$, 
where $|\cA| \geq mt+k-1$, then there exists a polynomial $G(Y_1,\ldots,Y_{k-1})$ (in $k(k-1)$ variables) which is homogeneous of degree $mt$ in each of the $k$-tuples of variables $Y_j$, with the following properties.
\begin{enumerate}
\item[(i)]
$G(a_1,\ldots,a_{k-2},X)=(f_S(X))^m$ for every $(k-2)$-subset $S=\{a_1,\ldots,a_{k-2}\}$ of $\cA$;
\item[(ii)]
$G(a_1,\ldots,a_{k-1})=0$ if $a_i=a_j$ for some $i\neq j$;
\item[(iii)]
$G$ is symmetric in its $k-1$ arguments $Y_1,\ldots,Y_{k-1}$;
\end{enumerate}
\end{theorem}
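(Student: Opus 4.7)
The plan is to take the polynomial $G(X_1,\ldots,X_{k-1})$ already constructed in equation~(\ref{eqn:G}) during the proof of Theorem~\ref{thm:SBBT} and verify that it satisfies all three properties; essentially all of the work has been done already, and only bookkeeping remains.

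First I would argue multi-homogeneity and property (i). Each summand of $G$ is a scalar multiple of the product $\prod_{u\in E\setminus T}\det(X_1,\ldots,X_{k-1},u)$, and this product consists of $|E\setminus T|=mt$ factors, each of which is linear in each $X_j$ separately. Hence $G$ is homogeneous of degree $mt$ in each $k$-tuple $Y_j=X_j$. Property (i) is precisely the content of Theorem~\ref{thm:SBBT}: for any $(k-2)$-subset $S=\{a_1,\ldots,a_{k-2}\}$ of $\cA$ we already have $G(a_1,\ldots,a_{k-2},X)=(f_S(X))^m$.

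For property (ii), if the argument contains $b_i=b_j$ for some $i\neq j$, then every determinant $\det(b_1,\ldots,b_{k-1},u)$ in a numerator has two equal rows and hence vanishes. Since each summand carries $mt\geq 1$ such factors, the whole polynomial vanishes at such arguments.

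The main point to verify is the symmetry in (iii). A transposition of two arguments $X_i,X_j$ changes the sign of each of the $mt$ determinant factors in each summand, contributing a global factor $(-1)^{mt}$; this equals $+1$ either because $mt=2t$ (when $q$ is odd and $m=2$) or because we work in characteristic two (when $q$ is even and $m=1$). The remaining subtlety is that the coefficient $c_T=(f_{T\setminus\{a_{k-1}\}}(a_{k-1}))^m$ of each summand must not depend on the chosen ordering of $T$; this is precisely the observation in the proof of Theorem~\ref{thm:SBBT} that any reordering of $S$ can only ever multiply $h_S(X)$ by $(-1)^{mt}=1$, and it follows from the scaled coordinate-free lemma of tangents (Lemma~\ref{lem:g_function}) by the same parity argument. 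I expect no serious obstacle here, since both sources of sign are controlled uniformly by the parity of $mt$ in the two cases.
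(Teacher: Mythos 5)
Your proposal is correct and takes essentially the same route as the paper, which also obtains Theorem~\ref{thm:q_even_odd} by reading properties (i)--(iii) directly off the explicit formula (\ref{eqn:G}): multi-homogeneity and the vanishing in (ii) from the $mt$ determinant factors, (i) from Theorem~\ref{thm:SBBT}, and the symmetry in (iii) from the sign $(-1)^{mt}=1$ together with the well-definedness of each summand guaranteed by the scaled coordinate-free lemma of tangents.
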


Note that for $q$ even, Theorem \ref{thm:q_even_odd} is an improvement of Theorem~\ref{thm:form_existence}. It proves that the modulo $\Phi_t[X]$ condition is not necessary in Theorem~\ref{thm:form_existence} for $q$ even, although the uniqueness would not be valid without the modulo $\Phi_t[X]$ condition. For $q$ odd, Theorem \ref{thm:q_even_odd} has the advantage that its properties hold true without the modulo $\Phi_t[X]$ condition; the disadvantage is that the degree of $G$ in each of its $k$-tuples of arguments is $2t$ whereas for the form $F$ from Theorem~\ref{thm:form_existence} it is only $t$. We do not believe that the modulo $\Phi_t[X]$ condition is necessary in Theorem~\ref{thm:form_existence} for $q$ odd although, as in the $q$ even case, the uniqueness would not be valid without the modulo $\Phi_t[X]$ condition.


  \bigskip
  
   Simeon Ball,\\
   Departament de Matem\`atiques, \\
Universitat Polit\`ecnica de Catalunya, \\
M\`odul C3, Campus Nord,\\
c/ Jordi Girona 1-3,\\
08034 Barcelona, Spain \\
   {\tt simeon@ma4.upc.edu}
  
  \bigskip
  
Michel Lavrauw,\\
Faculty of Engineering and Natural Sciences,\\
Sabanc\i \  University,\\
Istanbul, Turkey\\
{\tt mlavrauw@sabanciuniv.edu}

\end{document}